\numberwithin{equation}{section}
\newtheorem{thm}{Theorem}[section]
\newtheorem{lem}[thm]{Lemma}
\numberwithin{equation}{section}
\begin{document}

\leftline{ \scriptsize}

\vspace{1.3 cm}
\title
{A class of normal dilation matrices affirming the Marcus-de Oliveira conjecture}
\author{Kijti Rodtes}
\thanks{{\scriptsize
\newline Keywords: Normal dilation, Normal matrices, Marcus de Oliveira Conjecture\\ MSC(2010): 15A15; 15A60; 15A86}}
\hskip -0.4 true cm

\maketitle


\begin{abstract}  In this article, we prove a class of normal dilation matrices affirming the Marcus-de Oliveira conjecture.
\end{abstract}

\vskip 0.2 true cm


\pagestyle{myheadings}
\markboth{\rightline {\scriptsize Kijti Rodtes}}
         {\leftline{\scriptsize }}
\bigskip
\bigskip


\vskip 0.4 true cm

Throughout, $n$ will denote a positive integer.  The determinant conjecture of Marcus and de Oliveira states that the determinant of the sum of two $n$ by $n$ normal matrices $A$ and $B$ belongs to the convex hull of the $n!$ $\sigma$-points, $z_\sigma:=\prod _{i=1}^n (a_i+b_{\sigma(i)})$, indexed by $\sigma\in S_n$, where $a_i$'s and $b_j$'s are eigenvalues of $A$ and $B$, respectively (see \cite{Marcus},\cite{Oliveira},\cite{zhan}).  We briefly write as $(A,B)\in MOC$ if the pair of normal matrices $A,B$ affirms the Marcus and de Oliveira conjecture, i.e., $$ \det(A+B)\in co( \{z_\sigma | \sigma \in S_n \}).$$  

In \cite{Fiedler}, Fiedler showed that, for two hermitian matrices $A,B$ $$\Delta(A,B):=\{\det(A+UBU^*)| U\in U_n(\mathbb{C})\}$$ is a line segment with $\sigma$-points as endpoints, where $U_n(\mathbb{C})$ denotes the set of all unitary matrices of dimension $n \times n$.  This result, in fact, motivates the conjecture.  As a consequence of Fiedler's result, $(A,B)\in MOC$ for any pair of skew-hermitian matrices $A,B$. 

In \cite{BEKOPRO}, N. Bebiano, A. Kovacec, and J.da Providencia provided that if $A$ is positive definite and $B$ a non-real scalar multiple of a hermitian matrix, then $(A,B)\in MOC$.  They also obtained that if eigenvalues of $A$ are pairwise distinct complex numbers lying on a line $l$ and all eigenvalues of $B$ lie on a parallel to $l$, then $(A,B)\in MOC$. S.W. Drury showed that $(A,B)\in MOC$ for the case that $A$ is hermitian and $B$ is non-real scalar multiple of a hermitian matrix (essentially hermitian matrix) in \cite{Dru1} and the case that $A=sU$ and $B=tV$ for $s,t\in \mathbb{C}$ and $U,V\in U_n(\mathbb{C})$ in \cite{Dru2}.   

It is also known that, for normal matrices $A,B\in M_n(\mathbb{C})$ (the set of all $n\times n$ matrices over $\mathbb{C}$), $(A,B)\in MOC$: if $\det(A+B)=0$ (\cite{Dru3}); if the point $z_\sigma$ lie all on a straight line (\cite{MV}); if $n=2,3$ (\cite{Oliveira,BMP}); if $A$ or $B$ has only two distinct eigenvalues, one of them simple, (\cite{Oliveira}).  However, it seems that there is no new affirmative class of normal matrices to this conjecture after the year 2007.

Let $X$ be a square $n\times n$ complex matrix and $s$ be a complex number.  It is a direct calculation to see that 
$$ N(X,s):=\left(
\begin{array}{cc}
X & (X-sI)^* \\
(X-sI)^* & X \\
\end{array}
\right)  $$
is a normal matrix of size $2n\times 2n$ and thus it is a normal dilation of $X$. We will see (in the proof of the main result) that the eigenvalues of $N(X,s)$ lie on both real and imaginary axis and thus this matrix need not be essentially hermitian or a scalar multiple of a unitary matrix.  In this short note, we show that:
\begin{thm}\label{mainresult}
	Let $X,Y\in M_n(\mathbb{C})$ and $s,t\in \mathbb{C}$.  Then $(N(X,s),N(Y,t))\in MOC$.
\end{thm}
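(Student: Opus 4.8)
The plan is to diagonalize the $2\times2$ block structure of $N(X,s)$ and $N(Y,t)$ simultaneously, which converts $\det\bigl(N(X,s)+N(Y,t)\bigr)$ into a \emph{product} of two $n\times n$ determinants, each of which is controlled by the ``eigenvalues on parallel lines'' case of the conjecture due to Bebiano, Kovacec and da~Providencia \cite{BEKOPRO}; the two resulting convex-combination representations are then multiplied together to produce a convex combination of genuine $\sigma$-points of the $2n\times 2n$ pair.

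\emph{Step 1 (simultaneous block-diagonalization).} Let $W=\tfrac1{\sqrt2}\left(\begin{smallmatrix}I&I\\ I&-I\end{smallmatrix}\right)$, which is unitary with $W^{*}=W$. A direct computation gives
$$W\,N(X,s)\,W^{*}=\begin{pmatrix}X+X^{*}-\bar s I&0\\ 0&X-X^{*}+\bar s I\end{pmatrix}=:\begin{pmatrix}P&0\\ 0&Q\end{pmatrix},$$
and likewise $W\,N(Y,t)\,W^{*}=\mathrm{diag}(P',Q')$ with $P'=Y+Y^{*}-\bar t I$ and $Q'=Y-Y^{*}+\bar t I$. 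Because the \emph{same} $W$ works for both matrices,
$$\det\bigl(N(X,s)+N(Y,t)\bigr)=\det(P+P')\cdot\det(Q+Q').$$
Moreover $\mathrm{spec}\,N(X,s)=\mathrm{spec}\,P\cup\mathrm{spec}\,Q$: writing $\mu_1,\dots,\mu_n$ for the (real) eigenvalues of the Hermitian matrix $X+X^{*}$ and $i\theta_1,\dots,i\theta_n$ for the eigenvalues of the skew-Hermitian matrix $X-X^{*}$, the eigenvalues of $N(X,s)$ are $\mu_k-\bar s$ (all with imaginary part $\operatorname{Im}s$, hence on one horizontal line) together with $\bar s+i\theta_k$ (all with real part $\operatorname{Re}s$, hence on one vertical line); analogously for $N(Y,t)$, with $\nu_1,\dots,\nu_n$ and $i\phi_1,\dots,i\phi_n$ replacing the $\mu_k$ and $\theta_k$.

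\emph{Step 2 (each factor lies in a convex hull of ``partial'' $\sigma$-points).} The eigenvalues of $P=(X+X^{*})-\bar s I$ lie on the line $\{\operatorname{Im}z=\operatorname{Im}s\}$ and those of $P'$ on the parallel line $\{\operatorname{Im}z=\operatorname{Im}t\}$; by the parallel-lines result of \cite{BEKOPRO}, $(P,P')\in MOC$ provided $X+X^{*}$ has pairwise distinct eigenvalues, i.e.
$$\det(P+P')\in co\bigl(\{\, w_\tau:=\textstyle\prod_{k=1}^{n}\bigl((\mu_k-\bar s)+(\nu_{\tau(k)}-\bar t)\bigr)\ \mid\ \tau\in S_n\,\}\bigr).$$
Symmetrically, the eigenvalues of $Q$ and $Q'$ lie on the parallel vertical lines $\{\operatorname{Re}z=\operatorname{Re}s\}$ and $\{\operatorname{Re}z=\operatorname{Re}t\}$, so $(Q,Q')\in MOC$ when $X-X^{*}$ has pairwise distinct eigenvalues, giving
$$\det(Q+Q')\in co\bigl(\{\, v_\pi:=\textstyle\prod_{k=1}^{n}\bigl((\bar s+i\theta_k)+(\bar t+i\phi_{\pi(k)})\bigr)\ \mid\ \pi\in S_n\,\}\bigr).$$
The distinctness hypotheses are harmless: the Hermitian and skew-Hermitian parts of $X$ can be perturbed independently, so there is a sequence $X_m\to X$ for which both $X_m+X_m^{*}$ and $X_m-X_m^{*}$ have simple spectra; applying the above to $X_m$ and letting $m\to\infty$, the determinants converge (continuity of $\det$), the finitely many points $w_\tau,v_\pi$ move continuously, and membership in the convex hull of a \emph{fixed} finite family of points survives the limit (pass to a convergent subsequence of the barycentric coordinates).

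\emph{Step 3 (recombination).} Write $\det(P+P')=\sum_{\tau}a_\tau w_\tau$ and $\det(Q+Q')=\sum_{\pi}b_\pi v_\pi$ as convex combinations. Then
$$\det\bigl(N(X,s)+N(Y,t)\bigr)=\sum_{\tau,\pi}(a_\tau b_\pi)\,(w_\tau v_\pi),$$
which is again a convex combination, since $a_\tau b_\pi\ge0$ and $\sum_{\tau,\pi}a_\tau b_\pi=1$. Finally, order the eigenvalues of $N(X,s)$ so that the first $n$ are the $\mu_k-\bar s$ and the last $n$ are the $\bar s+i\theta_k$, and likewise for $N(Y,t)$; for $\tau,\pi\in S_n$ the permutation $\sigma\in S_{2n}$ acting as $\tau$ on $\{1,\dots,n\}$ and as $k\mapsto n+\pi(k-n)$ on $\{n+1,\dots,2n\}$ satisfies $z_\sigma=w_\tau v_\pi$. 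Hence each $w_\tau v_\pi$ is one of the $(2n)!$ $\sigma$-points of the pair $(N(X,s),N(Y,t))$, so $\det\bigl(N(X,s)+N(Y,t)\bigr)\in co(\{z_\sigma\mid\sigma\in S_{2n}\})$, i.e.\ $(N(X,s),N(Y,t))\in MOC$. The conceptual core is the product formula of Step 1, which trades one $2n$-dimensional instance of the conjecture for two $n$-dimensional instances already settled in \cite{BEKOPRO}; the only real technicality is Step 2's distinctness requirement, handled by the density/limit argument. One should also confirm that ``parallel'' in the statement of \cite{BEKOPRO} is permitted to include coincident lines (the case $\operatorname{Im}s=\operatorname{Im}t$, resp.\ $\operatorname{Re}s=\operatorname{Re}t$); if not, an extra arbitrarily small perturbation of $s$ separates the two lines without affecting the limiting conclusion.
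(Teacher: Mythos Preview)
Your argument is correct and follows the same overall architecture as the paper's proof: a single unitary $W$ (the paper uses $U=\tfrac1{\sqrt2}\left(\begin{smallmatrix}I&I\\-I&I\end{smallmatrix}\right)$, which just swaps your two diagonal blocks) simultaneously block-diagonalizes $N(X,s)$ and $N(Y,t)$, reducing the question to two $n\times n$ instances of MOC whose convex-combination expressions are then multiplied together exactly as in your Step~3 (this is the paper's Lemma~1).

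The only substantive difference is the tool you invoke for the $n\times n$ blocks. You appeal to the parallel-lines case of \cite{BEKOPRO}, which carries a distinct-eigenvalue hypothesis and forces you into the perturbation/limit argument of Step~2 (and the extra caveat about coincident lines). The paper instead uses Drury's polynomial result (Theorem~\ref{key1} here, from \cite{Dru4}): for Hermitian $H_1,H_2$ the characteristic-type polynomial of $H_1+H_2$ lies in the convex hull of the corresponding $\sigma$-polynomials, so evaluating at $\lambda=\alpha+\beta$ gives $(H_1+\alpha I,\,H_2+\beta I)\in MOC$ for \emph{arbitrary} complex shifts, with no simplicity assumption and no limiting step. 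This is cleaner and sidesteps both technicalities you flagged. Conversely, your route has the virtue of making the geometric picture (eigenvalues on two pairs of parallel lines) explicit, and your continuity argument is correctly set up: the barycentric coordinates live in a fixed compact simplex and the vertices $w_\tau,v_\pi$ depend continuously on $X$, so a subsequence argument closes it.
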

Note that if $A\in M_n(\mathbb{C})$ is normal then $UAU^*$ is also normal for any $U\in U_n(\mathbb{C})$.  Then $V N(X,s)V^*$ is also a normal dilation of $X$ for any $V\in U_{2n}(\mathbb{C})$.  Moreover, since the conjecture is invariant under simultaneous unitary similarity, we also deduce from Theorem \ref{mainresult} that $(VN(X,s)V^*,VN(Y,t)V^*)\in MOC$ for any $V\in U_{2n}(\mathbb{C})$.

To prove the main result, we will use the following lemmas.

\begin{lem}\label{tool1} Let  $A,B\in M_n(\mathbb{C})$ and $C,D\in M_m(\mathbb{C})$ be normal.  If $(A,B)\in MOC$ and $(C,D)\in MOC$, then $(A\oplus C, B\oplus D)\in MOC$.
\end{lem}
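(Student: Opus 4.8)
The plan is to reduce the statement to a short computation exploiting the block structure. Since a direct sum of normal matrices is again normal, the conjecture is meaningful for the pair $(A\oplus C,\,B\oplus D)$, and the elementary fact driving everything is that addition respects direct sums, $(A\oplus C)+(B\oplus D)=(A+B)\oplus(C+D)$, so that $\det\big((A\oplus C)+(B\oplus D)\big)=\det(A+B)\cdot\det(C+D)$.

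First I would fix orderings $a_1,\dots,a_n$ and $b_1,\dots,b_n$ of the eigenvalues of $A$ and $B$, and $c_1,\dots,c_m$ and $d_1,\dots,d_m$ of those of $C$ and $D$. Then the eigenvalue lists of $A\oplus C$ and $B\oplus D$ are $e=(a_1,\dots,a_n,c_1,\dots,c_m)$ and $f=(b_1,\dots,b_n,d_1,\dots,d_m)$, and the $\sigma$-points of $(A\oplus C,\,B\oplus D)$ are $\prod_{k=1}^{n+m}(e_k+f_{\rho(k)})$ for $\rho\in S_{n+m}$. Using $(A,B)\in MOC$ and $(C,D)\in MOC$, I would write $\det(A+B)=\sum_{\sigma\in S_n}\lambda_\sigma z_\sigma$ and $\det(C+D)=\sum_{\tau\in S_m}\mu_\tau w_\tau$ with $\lambda_\sigma,\mu_\tau\ge 0$ and $\sum\lambda_\sigma=\sum\mu_\tau=1$, where $z_\sigma=\prod_{i=1}^n(a_i+b_{\sigma(i)})$ and $w_\tau=\prod_{j=1}^m(c_j+d_{\tau(j)})$.

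Next, multiplying these two expansions gives $\det\big((A\oplus C)+(B\oplus D)\big)=\sum_{\sigma\in S_n}\sum_{\tau\in S_m}\lambda_\sigma\mu_\tau\,(z_\sigma w_\tau)$, and since the coefficients $\lambda_\sigma\mu_\tau$ are nonnegative with total sum $1$, it suffices to exhibit each product $z_\sigma w_\tau$ as a $\sigma$-point of $(A\oplus C,\,B\oplus D)$. For this I would use the standard embedding $S_n\times S_m\hookrightarrow S_{n+m}$: let $\rho=\rho_{\sigma,\tau}$ be defined by $\rho(i)=\sigma(i)$ for $1\le i\le n$ and $\rho(n+j)=n+\tau(j)$ for $1\le j\le m$. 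Because $\rho$ maps the first block into itself and the second into itself, $\prod_{k=1}^{n+m}(e_k+f_{\rho(k)})=\prod_{i=1}^n(a_i+b_{\sigma(i)})\cdot\prod_{j=1}^m(c_j+d_{\tau(j)})=z_\sigma w_\tau$. Hence $\det\big((A\oplus C)+(B\oplus D)\big)\in co(\{\,\prod_{k=1}^{n+m}(e_k+f_{\rho(k)})\mid\rho\in S_{n+m}\,\})$, which is precisely the assertion $(A\oplus C,\,B\oplus D)\in MOC$.

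I do not expect a real obstacle here; the only step that requires some care is the last one — checking that the block-diagonal permutations $\rho_{\sigma,\tau}$ genuinely realize the products $z_\sigma w_\tau$ as $\sigma$-points of the direct sum, and noting that it is harmless that these permutations form only a subset of $S_{n+m}$, since we are claiming membership in the convex hull taken over the full (and therefore larger) index set.
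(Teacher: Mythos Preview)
Your proof is correct and follows essentially the same route as the paper's: factor $\det((A\oplus C)+(B\oplus D))=\det(A+B)\det(C+D)$, express each factor as a convex combination of $\sigma$-points, and then identify each cross-term $z_\sigma w_\tau$ with the $\sigma$-point of the direct sum corresponding to the block permutation $\rho_{\sigma,\tau}\in S_{n+m}$. Your added remarks (normality of direct sums, and that the block permutations form only a subset of $S_{n+m}$) are welcome clarifications but do not change the argument.
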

\begin{proof} Suppose that $\{a_i \,|\, 1\leq i\leq n \}$, $\{b_i \,|\, 1\leq i\leq n \}$, $\{c_i \,|\, 1\leq i\leq m \}$ and $\{d_i\,|\, 1\leq i\leq m \}$ are ordered set of the eigenvalues of $A,B,C$ and $D$, respectively.  Denote $e_i:=a_i$, $f_i:=b_i$ for $i=1,\dots,n$ and $e_{n+j}=c_j$, $f_{n+j}=d_j$ for $j=1,\dots,m$.  Then, $\{ e_i\,|\, 1\leq i \leq n+m\}$ and $\{ f_i\,|\, 1\leq i \leq n+m\}$ are ordered set of the eigenvalues of $A\oplus C$ and $B\oplus D$, respectively.  For each $\sigma\in S_n, \pi\in S_m$ and $\theta\in S_{n+m}$, denote $z_\sigma, v_\pi$ and $w_\theta$ the product $\prod_{i=1}^n(a_i+b_{\sigma(i)})$, $\prod_{i=1}^m(c_i+d_{\pi(i)})$ and $\prod_{i=1}^{n+m}(e_i+f_{\theta(i)})$, respectively.  Suppose that $(A,B)\in MOC$ and $(C,D)\in MOC$, then $$\det(A+B)=\sum_{\sigma\in S_n}t_\sigma z_\sigma  \hbox{ and } \det(C+D)=\sum_{\pi\in S_m}s_\pi v_\pi,$$ where $t_\sigma,s_\pi \in [0,1]$ such that $\sum_{\sigma\in S_n} t_\sigma=1$ and $\sum_{\sigma\in S_m} s_\pi=1$.  Note that
\begin{eqnarray*}
  \det(A\oplus C+B\oplus D) &=& \det((A+B)\oplus(C+D) )\\
   &=& \det(A+B)\cdot \det(C+D) \\
   &=&(\sum_{\sigma\in S_n}t_\sigma z_\sigma)(\sum_{\pi\in S_m}s_\pi v_\pi)\\
   &=& \sum_{\sigma\in S_n,\pi\in S_m}(t_\sigma s_\pi) (z_\sigma v_\pi).
\end{eqnarray*}
For each $\sigma\in S_n$ and $\pi\in S_m$, define a permutation $\theta(\sigma,\pi)\in S_{n+m}$ by $$\theta(\sigma,\pi):=\left(
                                                                             \begin{array}{cccccc}
                                                                               1 & \cdots & n & n+1 & \cdots & n+m \\
                                                                               \sigma(1) & \cdots & \sigma(n) & n+\pi(1) & \cdots & n+\pi(m) \\
                                                                             \end{array}
                                                                           \right)
$$
Then $w_{\theta(\sigma,\pi)}=z_\sigma v_\pi$.  Since, for each $\sigma\in S_n$ and $\pi\in S_m$, $t_\sigma s_\pi\in[0,1]$ and  $$\sum_{\sigma\in S_n,\pi\in S_m}(t_\sigma s_\pi)=(\sum_{\sigma\in S_n} t_\sigma)(\sum_{\sigma\in S_m} s_\pi)=(1)(1)=1,$$
we conclude that  $$\det(A\oplus C+B\oplus D)\in co\{ w_{\theta(\sigma,\pi)} \,|\, \sigma\in S_n,\pi\in S_m\}\subseteq co\{w_\theta \,|\, \theta\in S_{n+m} \}.$$
Hence $(A\oplus C, B\oplus D)\in MOC$.
\end{proof}
To be a self contained material, we record a result of S.W. Drury. 
\begin{thm}\label{key1}\cite{Dru4}
	Let $A$ and $B$ be hermitian matrices with the given eigenvalues $(a_1,\dots, a_n)$ and $(b_1,\dots,b_n)$ respectively.  Let $(t_1,\dots,t_n)$ be the eigenvalues of $A+B$.  Then
	$$\prod_{j=1}^n (\lambda+t_j)\in co\{\prod_{j=1}^n(\lambda+a_j+b_{\sigma(j)}) | \sigma \in S_n \},  $$
	where $co$ denotes the convex hull in the space of polynomials and $\lambda$ is an indeterminate.
\end{thm}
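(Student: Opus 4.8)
The plan is to realize the left-hand polynomial as a parametrized Hermitian determinant, feed it into Fiedler's line-segment theorem for each real value of the indeterminate, and then upgrade the resulting family of scalar convex combinations to a single one valid at the level of polynomials.

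First I would reduce to a normal form. Since both sides depend only on the spectra of $A$, $B$ and $A+B$, I may replace $A,B$ by $UAU^*,UBU^*$ and assume $A=\mathrm{diag}(a_1,\dots,a_n)$, with $B$ an arbitrary Hermitian matrix of spectrum $(b_1,\dots,b_n)$. For a fixed real $\lambda$ the matrices $\lambda I+A$ and $B$ are Hermitian with eigenvalues $\lambda+a_j$ and $b_j$, and their sum has eigenvalues $\lambda+t_j$, so
\[
\prod_{j=1}^n(\lambda+t_j)=\det\big((\lambda I+A)+B\big).
\]
Applying Fiedler's theorem from \cite{Fiedler} to the Hermitian pair $(\lambda I+A,B)$ — whose determinant and whose $\sigma$-points $p_\sigma(\lambda):=\prod_j(\lambda+a_j+b_{\sigma(j)})$ are all real — the determinant lies on a segment between two of the $p_\sigma(\lambda)$, hence in their real convex hull. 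Thus for each fixed real $\lambda$ there are weights with
\[
\prod_{j=1}^n(\lambda+t_j)=\sum_{\sigma\in S_n}c_\sigma(\lambda)\,p_\sigma(\lambda),\qquad c_\sigma(\lambda)\ge 0,\ \sum_\sigma c_\sigma(\lambda)=1,
\]
so the membership already holds pointwise in $\lambda$; what remains is to remove the dependence of the weights on $\lambda$.

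To pass from pointwise to polynomial, I would work in the finite-dimensional affine space of monic real polynomials of degree $n$, which contains $p(\lambda):=\prod_j(\lambda+t_j)$ and the $n!$ points $p_\sigma$; their leading coefficients are all $1$ and, by the trace identity, their $\lambda^{n-1}$-coefficients all equal $\sum_i a_i+\sum_i b_i$, so only the $n-1$ lower coefficients are at issue. By the separation theorem, $p\in co\{p_\sigma\}$ holds iff $\phi(p)\le\max_\sigma\phi(p_\sigma)$ for every real linear functional $\phi$ on this coefficient space. The evaluation functionals $p\mapsto p(\lambda)$ recover exactly the Fiedler bounds just obtained, but a general $\phi$ is an arbitrary combination of the coefficients and is not of evaluation form.

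The main obstacle is precisely this upgrade: Fiedler's theorem controls only one scalar coordinate at a time, whereas membership in the polytope $co\{p_\sigma\}$ is governed by all functionals on the coefficient space simultaneously, and the Fiedler weights $c_\sigma(\lambda)$ genuinely vary with $\lambda$. To close the gap I expect to need the coefficientwise structure of the determinant: in the Laplace expansion $\det(\lambda I+A+B)=\sum_{T}\prod_{i\notin T}(\lambda+a_i)\,\det B[T\,|\,T]$ the principal minors $\det B[T\,|\,T]$ are the diagonal entries of the compound matrices $\wedge^{|T|}B$, whose eigenvalues are the products $\prod_{i\in T'}b_i$. A Schur--Horn argument then places each fixed-size family of minors in the permutohedron of those products. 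The crux of the proof, and the step I expect to be hardest, is to realize all of these majorization relations by a single probability distribution on $S_n$, so that one weight vector $(c_\sigma)$ serves every coefficient — equivalently every functional $\phi$ — at once.
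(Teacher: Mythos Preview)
The paper does not prove this statement: Theorem~\ref{key1} is quoted verbatim from Drury~\cite{Dru4} and used as a black box, so there is no ``paper's own proof'' to compare against. Your task was therefore to reprove Drury's result from scratch.

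Your outline correctly isolates the difficulty but does not overcome it. The Fiedler step is fine and gives, for each real $\lambda$, weights $c_\sigma(\lambda)$ realizing $p(\lambda)$ as a convex combination of the $p_\sigma(\lambda)$. You then need a \emph{single} probability vector $(c_\sigma)_{\sigma\in S_n}$ that works simultaneously for every coefficient of the polynomial, and you say so explicitly. The problem is that this step \emph{is} the theorem: once one knows that the coefficient vector $\big(e_1(t),\dots,e_n(t)\big)$ lies in $co\{\big(e_1(a+b_\sigma),\dots,e_n(a+b_\sigma)\big):\sigma\in S_n\}$, the polynomial statement is immediate, and conversely. Your proposal therefore reduces the theorem to itself.

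The Schur--Horn idea you sketch does not close the gap. For a fixed cardinality $k$, Schur--Horn applied to $\wedge^k B$ does place the vector of principal $k\times k$ minors of $B$ in the permutohedron of the $k$-fold eigenvalue products, i.e.\ it furnishes a doubly stochastic matrix $D_k$ with $(\det B[T\,|\,T])_{|T|=k}=D_k\cdot(\prod_{i\in T'}b_i)_{|T'|=k}$. But nothing forces the $D_k$ for different $k$ to come from a common distribution on $S_n$; indeed the different compound levels live on different index sets $\binom{[n]}{k}$, and a Birkhoff decomposition of $D_k$ into permutations of $\binom{[n]}{k}$ need not be induced by permutations of $[n]$ at all. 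Moreover, in your Laplace expansion the factors $\prod_{i\notin T}(\lambda+a_i)$ mix the $a_i$'s across degrees, so even a hypothetical common distribution on the $B$-minors would still have to interact correctly with the $a$-part of each $p_\sigma$. Drury's original argument in \cite{Dru4} supplies precisely this simultaneous convexity, and it is not a consequence of Fiedler plus Schur--Horn alone; you would need to import his construction (or an equivalent one) to finish.
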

As a corollary of the above theorem, we have that:
\begin{lem} \label{tool2}
	Let $X,Y\in M_n(\mathbb{C})$ and $\alpha,\beta\in \mathbb{C}$. Then $(X-X^*+\alpha I_n,Y-Y^*+\beta I_n) \in MOC$  and $(X+X^*+\alpha I_n,Y+Y^*+\beta I_n) \in MOC$.
\end{lem}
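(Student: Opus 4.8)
The plan is to reduce Lemma~\ref{tool2} to Drury's Theorem~\ref{key1}. The point is that $X+X^*$ and $X-X^*$ become hermitian after multiplying by the scalars $1$ and $-i$ respectively, that adding $\alpha I_n$ keeps such a matrix normal, and that passing from the determinant to the polynomial $\prod_j(\lambda+t_j)$ of Theorem~\ref{key1} is simply a substitution $\lambda\mapsto\text{constant}$.

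Concretely, I would first prove the following single statement: \emph{if $A,B\in M_n(\mathbb{C})$ are hermitian and $c,\mu,\nu\in\mathbb{C}$ with $c\neq 0$, then $(cA+\mu I_n,\,cB+\nu I_n)\in MOC$.} To see this, note that $cA+\mu I_n$ is a scalar multiple of the identity plus a scalar multiple of a hermitian matrix, hence normal, with eigenvalues $ca_j+\mu$, where $(a_j)$ are the eigenvalues of $A$; likewise $cB+\nu I_n$ is normal with eigenvalues $cb_j+\nu$. Writing $(t_j)$ for the eigenvalues of the hermitian matrix $A+B$,
\[
\det\big((cA+\mu I_n)+(cB+\nu I_n)\big)=\det\big(c(A+B)+(\mu+\nu)I_n\big)=c^{\,n}\prod_{j=1}^n\Big(\tfrac{\mu+\nu}{c}+t_j\Big).
\]
Theorem~\ref{key1} provides a polynomial identity $\prod_j(\lambda+t_j)=\sum_{\sigma\in S_n}\gamma_\sigma\prod_j(\lambda+a_j+b_{\sigma(j)})$ with $\gamma_\sigma\geq 0$ and $\sum_\sigma\gamma_\sigma=1$. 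Evaluating it at $\lambda=\tfrac{\mu+\nu}{c}$ and multiplying through by $c^{\,n}$, distributing one factor $c$ into each of the $n$ factors, yields
\[
\det\big((cA+\mu I_n)+(cB+\nu I_n)\big)=\sum_{\sigma\in S_n}\gamma_\sigma\prod_{j=1}^n\big((ca_j+\mu)+(cb_{\sigma(j)}+\nu)\big),
\]
and the product on the right is precisely the $\sigma$-point of the pair $(cA+\mu I_n,\,cB+\nu I_n)$. This exhibits the determinant as a convex combination of $\sigma$-points, so the pair lies in $MOC$.

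The lemma then follows by two applications of this statement. For the hermitian-type assertion, take $c=1$, $A=X+X^*$, $B=Y+Y^*$ (both hermitian), $\mu=\alpha$, $\nu=\beta$. For the skew-type assertion, take $c=i$, $A=-i(X-X^*)$, $B=-i(Y-Y^*)$ (both hermitian, since $X-X^*$ and $Y-Y^*$ are skew-hermitian), and again $\mu=\alpha$, $\nu=\beta$, so that $cA=X-X^*$ and $cB=Y-Y^*$ and hence $cA+\mu I_n=X-X^*+\alpha I_n$, $cB+\nu I_n=Y-Y^*+\beta I_n$. I do not anticipate any genuine obstacle here; the only things to watch are verifying normality of the matrices involved and tracking the scalar $c^{\,n}$ through the substitution so that the polynomial values are identified with the correct $\sigma$-points.
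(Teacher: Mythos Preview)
Your proposal is correct and follows essentially the same route as the paper: reduce to Drury's Theorem~\ref{key1} by observing that $X+X^*$ and $Y+Y^*$ are hermitian while $X-X^*$ and $Y-Y^*$ become hermitian after multiplication by $-i$, and then interpret the shift by $\alpha I_n,\beta I_n$ as a substitution $\lambda\mapsto(\mu+\nu)/c$ in the polynomial identity. The paper's proof is a two-line sketch that simply says ``by Theorem~\ref{key1}'' without spelling out the substitution or the factor $c^n$; your version makes that computation explicit and packages it as the clean intermediate statement $(cA+\mu I_n,\,cB+\nu I_n)\in MOC$ for hermitian $A,B$ and $c\neq 0$, which is a nice touch but not a different idea.
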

\begin{proof}
	Since $X+X^*$ and $Y+Y^*$ are hermitian, by Theorem \ref{key1}, we deduce directly that $(X+X^*+\alpha I_n,Y+Y^*+\beta I_n) \in MOC$.  Since $X-X^*$ and $Y-Y^*$ are skew-hermitian, $i(X-X^*)$ and $i(Y-Y^*)$ are hermitian.  Again, by Theorem \ref{key1}, $(X-X^*+\alpha I_n,Y-Y^*+\beta I_n) \in MOC$.

\end{proof}

\begin{proof}

\textit{(Theorem \ref{mainresult})} Let $U$ be the block matrix in $M_{2n}(\mathbb{C})$ defined by $$U:=\frac{1}{\sqrt{2}}\left(
\begin{array}{cc}
I_n & I_n \\
-I_n & I_n \\
\end{array}
\right).$$
It is a direct computation to see that $U$ is a unitary matrix and $$ U^*\left(\begin{array}{cc}
M & N \\
N & M \\
\end{array}\right)
U= (M-N)\oplus (M+N), $$
for any $M,N\in M_n(\mathbb{C})$.  Let $A:=X-X^*+(\overline{s}) I_n$, $B:=Y-Y^*+\overline{t} I_n$, $C:=X+X^*-(\overline{s}) I_n$, and  $D:=Y
+Y^*-\overline{t} I_n$. By Lemma \ref{tool2}, the pair of normal matrices $(A,B)$ and $(C,D)$ satisfy the conjecture. Hence, by Lemma \ref{tool1}, $(A\oplus C, B \oplus D)\in MOC$.  Therefore,
$$(N(X,s),N(Y,t))=(U (A\oplus C)U^*,U (B\oplus D)U^* )\in MOC, $$
which completes the proof.
\end{proof}

\section*{Acknowledgments}

The author would like to thank Prof Tin Yau Tam for bringing this topic to the author.  He would like to thank the referee(s) for valuable comments to improve the paper.  He also would like to thank Naresuan University for the financial support on the project number R2563C006.

\bigskip

\address \textbf{Kijti Rodtes} \\

{ Department of Mathematics, Faculty of Science, \\ Naresuan University, Phitsanulok 65000, Thailand}\\
\email{kijtir@nu.ac.th, \quad \quad}\\

\end{document}